\definecolor{darkred}{rgb}{0.4,0.1,0.1}
\DeclareMathOperator\ran{ran\,}
\DeclareMathOperator\rank{rank\,}
      \def\sI{{\mathfrak I}}
      \def\dC{{\mathbb C}}
   \def\dN{{\mathbb N}}
      \def\cL{{\mathcal L}}
\newcommand{\dom}{\mathrm{dom}\,}
\numberwithin{equation}{section}
\theoremstyle{plain}
\newtheorem{thm}{Theorem}[section]
\newtheorem{hypothesis}[thm]{Hypothesis}
\newtheorem{exam}[thm]{Example}
\newtheorem{lem}[thm]{Lemma}
\newtheorem{prop}[thm]{Proposition}
\newtheorem{cor}[thm]{Corollary}
\theoremstyle{plain}
\numberwithin{equation}{section}
\begin{document}

\title{\textbf{The effect of finite rank perturbations on Jordan chains of linear operators}}

\author{Jussi Behrndt, Leslie Leben, Francisco Mart\'{i}nez Per\'{i}a, \\ and Carsten Trunk}





\date{}

\maketitle



\begin{abstract}
\noindent
A general result on the structure and dimension of the root subspaces of a linear operator under finite rank perturbations is proved:
The increase of dimension from the $n$-th power of the kernel of the perturbed operator to the $(n+1)$-th power
differs from the increase of dimension of the corresponding powers of the kernels of the unperturbed operator by at most
the rank of the perturbation. This bound is sharp.
\end{abstract}

\bigskip
\textit{Keywords}: Finite rank perturbation, Jordan chain, root subspace

\textit{MSC 2010}: 47A55, 47A10, 15A18 \\
\section{Introduction}

Perturbation theory for linear operators and their spectra is
one of the main objectives in operator theory and functional analysis, with numerous
applications in mathematics, physics and engineering sciences.
 In many approaches compact perturbations and perturbations small in size
are investigated, e.g.\ when stability properties of the index,
 nullity and deficiency of Fredholm and semi-Fredholm operators are analysed.
A widely used and well-known fact on the effect of compact
perturbations is the following: If $S$ and $T$ are bounded operators
in a Banach space, $K=S-T$ is compact and $\lambda\in\dC$ is
such that $S-\lambda$ is Fredholm, then also $T-\lambda$ is Fredholm and the Fredholm
index is preserved. In particular, since $\ker (S-\lambda)$ and $\ker(S-\lambda)^{n+1} / \ker(S-\lambda)^n$
are finite dimensional the same is true for
$\ker (T-\lambda)$ and $\ker(T-\lambda)^{n+1} / \ker(T-\lambda)^n$. However,
for such an arbitrary compact perturbation $K$ there exists no
bound 
on the
dimensions of $\ker (T-\lambda)$ or $\ker(T-\lambda)^{n+1} / \ker(T-\lambda)^n$.
The situation is different when the perturbation is not only compact
but of finite rank.

In the present note we consider general linear operators $S$ and $T$ in a vector space $X$
such that $T$ is a finite rank perturbation of $S$.
It follows easily that
the dimensions of $\ker(S-\lambda)$ and $\ker(T-\lambda)$ differ
at most by $k$
if the perturbation $K=S-T$ is an operator with $\rank (K)=k$.
Our main objective is to explore the connections between the kernels of consecutive higher
powers of $S-\lambda$ and $T-\lambda$ in more detail, and to prove the following general result on the structure and dimensions of the
root subspaces under finite rank perturbations: Given a linear operator $S$ in $X$,
consider the space $\ker(S -\lambda)^{n+1}/\ker(S-\lambda)^n$.
Its dimension coincides with the number of linearly independent Jordan chains of $S$ at $\lambda$ of length at least $n+1$.
It then turns out that the change of the number of these Jordan chains of $S$ at $\lambda$
under a rank $k$ perturbation is bounded by $k$,
\begin{equation}\label{PudelsKern}
\left| \dim \left(\frac{\ker (S-\lambda)^{n+1}}{\ker (S-\lambda)^{n}}\right)-
\dim \left(\frac{\ker (T-\lambda)^{n+1}}{\ker (T-\lambda)^{n}} \right) \right| \leq k,
\end{equation}
and this bound is sharp, see Theorem~\ref{hurra} and Example~\ref{sharp}. Here $S$ and $T$ are defined on subspaces of $X$ and the finite rank perturbation is interpreted in a generalized sense, see Hypothesis \ref{hypo}.
In particular, our assumptions allow to treat
unbounded operators in Banach spaces and finite rank perturbations in resolvent sense.
We also emphasize that the dimensions of the root subspaces of the operators $S$ and $T$ may be infinite,
and that a finite rank perturbation may turn points from the resolvent set of $S$ into eigenvalues of infinite algebraic
multiplicity of $T$; cf. Example~\ref{examinf}.

If $X$ is finite dimensional, then $S$ and $T$ are matrices and \eqref{PudelsKern} was already proved by S.V.\ Savchenko in \cite[Lemma 2]{S05}, see also
\cite{BHZ,DM02,MMRR11,MMRR12,MMRR13,S03,S04} for related results on so-called generic perturbations of matrices.
Moreover, there exists a lower bound for the dimension of the root subspace of the perturbed operator $T$ in terms of the dimension of the root subspace of $S$ and the length of the Jordan chains of $S$ at $\lambda$;
cf. \cite{DM03, S05}. Such a
result was also proved by L.~H\"ormander and A.~Melin in a more general case:
the unperturbed operator $S$ is compact and the perturbation $K=T-S$ is of finite rank, see \cite[Theorem~3]{HM94}.
In Corollary \ref{corHurra} we obtain the same bound for the general setting considered here.

{\bf Acknowledgement.} The authors are most grateful and deeply indebted to Marinus A.\ Kaashoek, Volker Mehrmann,
Leiba Rodman, and Sergey V.\ Savchenko for fruitful discussions and very useful literature hints.

Jussi Behrndt gratefully acknowledges financial support by the Austrian Science Fund (FWF), project P 25162-N26.
Leslie Leben gratefully acknowledges the support from the Carl-Zeiss-Stiftung Jena.
Francisco Mart\'{\i}nez Per\'{\i}a gratefully acknowledges the support from CONICET PIP 0435.

\section{Main result}

Let $X$ be a vector  space over $\mathbb K$, where $\mathbb K$ stands
 either for $\mathbb R$ or $\mathbb C$.
 Let $S$ and $T$ be linear
  operators in $X$ defined on some linear subspaces
$\dom S$ and $\dom T$ of $X$, respectively.
We consider finite rank perturbations in the following generalized sense:

\begin{hypothesis}\label{hypo}
 There exists a linear subspace $M$ contained in $\dom S\cap\dom T$
 such that the restrictions $S\upharpoonright M$ and $T\upharpoonright M$ coincide on $M$ and
 $$
 \max\bigl\{\dim (\dom S /M),\dim (\dom T / M)\bigr\}=k<\infty.
 $$
\end{hypothesis}

Three typical situations where the above hypothesis is satisfied are the following:
\begin{itemize}
 \item [{\rm (i)}] $X$ is a finite dimensional space,
 $S$ and $T$ are defined on $X$ and the rank of $S-T$ is $k$. In this case, for a
  fixed basis of $X$, $S$ and $T$ are represented
 by matrices.
 \item [{\rm (ii)}] If $X$ is an arbitrary vector space, $\dom S= \dom T$ and $$\dim(\ran(S-T))=k.$$
 \item [{\rm (iii)}] $X$ is a Banach space, $S$ and $T$ are densely defined closed operators in $X$, and
there exists  $\mu\in\mathbb K$ in the resolvent set of $S$ and $T$ with
  $$
 \dim\bigl(\ran\bigl((S-\mu)^{-1}-(T-\mu)^{-1}\bigr)\bigr)=k.
 $$
\end{itemize}

Given $\lambda\in\mathbb K$, a finite ordered set of non-zero vectors
$\{x_0, \dots, x_{n-1}\}$ in $\dom S$ is a \emph{Jordan chain of length} $n$ at $\lambda$
if $(S-\lambda)x_0 = 0$
and $(S-\lambda)x_i = x_{i-1}$, $i = 1, \dots,n-1$. A Jordan chain of infinite length is defined accordingly.
 The elements of a Jordan chain are linearly independent.
The first $n-1$ elements of a Jordan chain of length $n$ form a Jordan chain of length $n-1$.
Furthermore, we say that $S$ has $k$ Jordan chains of length $n$ at $\lambda$
if there exist $k$ linearly independent Jordan chains of length $n$.
The \emph{root subspace $\cL_\lambda(S)$ of $S$
at $\lambda$} is the collection of all
Jordan chains of $S$ at $\lambda$,
$$\cL_\lambda(S)=\bigcup_{j=1}^\infty\ker(S-\lambda)^j.$$

The following theorem is the main result of this article.
In the special case that $X$ is finite dimensional 
it coincides with \cite[Lemma 2]{S05}.
The proof of Theorem~\ref{hurra} is given in Section \ref{proof}.

\begin{thm}\label{hurra}
 Let $S$ and $T$ be linear operators in $X$ satisfying Hypothesis~{\rm \ref{hypo}}.
 Then, the following holds for every $\lambda\in\mathbb K$:
  \begin{itemize}
 \item[\rm (i)] If $\ker( S-\lambda)^n$ is finite dimensional
  for some $n\in \mathbb N$, then the same holds
for $\ker (T-\lambda)^n$ and
\begin{align}\label{formel1}
\vert \dim \ker (S-\lambda)^n &- \dim \ker (T-\lambda)^n \vert \leq k\,n.
\end{align}
  \item[\rm (ii)] If $\ker (S-\lambda)^{n+1} / \ker (S-\lambda)^{n}$ is
  finite dimensional for some $n\in \mathbb N$, then the same holds
for $\ker (T-\lambda)^{n+1} / \ker (T-\lambda)^{n}$ and
\begin{equation}\label{formel2}
\left| \dim \left(\frac{\ker (S-\lambda)^{n+1}}{\ker (S-\lambda)^{n}}\right)-
\dim \left(\frac{\ker (T-\lambda)^{n+1}}{\ker (T-\lambda)^{n}} \right) \right| \leq k.
\end{equation}
 \end{itemize}
\end{thm}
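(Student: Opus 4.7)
The plan is to analyze both parts through a common ``compatibility'' filtration. After reducing to $\lambda=0$ and writing $A=S$, $B=T$, define for each $j\ge 1$
$$M_j = \{x \in M : A^i x \in M \text{ for } 0 \le i < j\}.$$
This agrees with the analogous subspace defined using $B$ in place of $A$, and on $M_j$ the iterates $A^i$ and $B^i$ coincide for $i\le j$; in particular $\ker A^j \cap M_j = \ker B^j \cap M_j$, and $A(M_{j+1})\subseteq M_j$.

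For part (i), I would telescope along the descending chain
$$\ker A^n \supseteq \ker A^n\cap M \supseteq \ker A^n \cap M_2 \supseteq \cdots \supseteq \ker A^n\cap M_n.$$
The outer step has codimension $\le\dim(\dom S/M)\le k$ since $\ker A^n\subseteq \dom S$. For $1\le j<n$ the map $x\mapsto A^j x + M$ embeds $(\ker A^n\cap M_j)/(\ker A^n\cap M_{j+1})$ into $(\ker A^{n-j}+M)/M\subseteq\dom S/M$, again of dimension $\le k$, since $A^j x \in \ker A^{n-j}\subseteq \dom S$ for $x\in\ker A^n$. Summing yields $\dim(\ker A^n/(\ker A^n\cap M_n))\le nk$. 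The analogous estimate for $B$ and the identity $\ker A^n\cap M_n = \ker B^n\cap M_n$ then give $|\dim\ker A^n - \dim\ker B^n|\le nk$. (Alternatively, (ii) immediately implies (i) by telescoping.)

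For part (ii), the injection $\ker A^{n+1}/\ker A^n \hookrightarrow \ker A$ induced by $A^n$ identifies $\dim(\ker A^{n+1}/\ker A^n)$ with $\dim U_A$, where $U_A := A^n(\ker A^{n+1})$; similarly $\dim(\ker B^{n+1}/\ker B^n) = \dim U_B$. Because $A^n = B^n$ on $M_{n+1}$, the subspace
$$W := A^n(\ker A^{n+1}\cap M_{n+1}) = B^n(\ker B^{n+1}\cap M_{n+1}) \subseteq M\cap\ker A$$
is common to both $U_A$ and $U_B$. Part (ii) would then follow from the sharper estimate $\dim(U_A/W)\le k$ and $\dim(U_B/W)\le k$, which forces both $\dim U_A$ and $\dim U_B$ to lie in $[\dim W,\dim W + k]$ and hence makes them differ by at most $k$. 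Equivalently, one needs
$$\dim\bigl(\ker A^{n+1}/(\ker A^n + \ker A^{n+1}\cap M_{n+1})\bigr)\le k.$$

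The main obstacle is this last inequality. The natural map $\Phi:\ker A^{n+1}\to\dom S/M$, $\Phi(x)=A^n x + M$, has image inside $(\ker A+M)/M$, a subspace of $\dom S/M$ of dimension $\le k$; but its kernel $\{x:A^n x\in M\}$ is typically strictly larger than $\ker A^n + \ker A^{n+1}\cap M_{n+1}$, because a vector $v=A^n x\in M\cap\ker A$ need not be of the form $A^n y$ with $y\in M_{n+1}\cap\ker A^{n+1}$ (an $M_{n+1}$-liftability obstruction). To close this gap I would combine $\Phi$ with the short exact sequence
$$0\to\frac{\ker A^{n+1}\cap M + \ker A^n}{\ker A^n}\to\frac{\ker A^{n+1}}{\ker A^n}\to\frac{\ker A^{n+1}+M}{\ker A^n + M}\to 0,$$
whose right term has dimension $\le k$ (via $\dom S/M$), and then descend through the filtration $M\supseteq M_2\supseteq\cdots\supseteq M_{n+1}$ to relate the middle term, isomorphic to $A^n(\ker A^{n+1}\cap M)$, to $W$. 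The hard part is arranging this descent so that each filtration step preserves the $k$-budget rather than accumulating it into the $(n+1)k$ bound that a naive application of part~(i) would give.
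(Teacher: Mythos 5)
Your part (i) is correct and complete: the filtration $\ker A^n\supseteq\ker A^n\cap M_1\supseteq\cdots\supseteq\ker A^n\cap M_n$, with each successive quotient embedded into $\dom S/M$ via $x\mapsto A^jx+M$, gives the bound $kn$, and the identity $\ker A^n\cap M_n=\ker B^n\cap M_n$ closes the argument. This is a genuinely different (and arguably more direct) route than the paper, which obtains (i) from the case $n=1$ together with repeated applications of (ii).

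Part (ii), however, contains a genuine gap --- precisely the one you flag yourself. The entire statement is reduced to the inequality
\begin{equation*}
\dim\bigl(\ker A^{n+1}/(\ker A^n+\ker A^{n+1}\cap M_{n+1})\bigr)\le k,
\end{equation*}
equivalently $\dim(U_A/W)\le k$, but no proof of it is offered; your filtration only yields $(n+1)k$, since each of the $n+1$ membership conditions defining $M_{n+1}$ may a priori cost $k$ dimensions. Moreover, this missing inequality is not a technical lemma but is essentially the theorem itself: the pair $S$ and $S\upharpoonright M$ satisfies Hypothesis~\ref{hypo} with the same $k$, one checks that $\dim\bigl(\ker(S\upharpoonright M)^{n+1}/\ker(S\upharpoonright M)^n\bigr)=\dim W$, and so Theorem~\ref{hurra}(ii) applied to this pair is exactly the bound $\dim(U_A/W)\le k$. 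The paper closes this gap by a mechanism absent from your outline: in the rank-one case (Proposition~\ref{general rank one pert}) one locates the minimal height $h$ at which some chain vector leaves $M$ and corrects all the other chains by scalar multiples of vectors from the single offending chain --- possible only because $\dim(\dom T/M)=1$, so every correction lies in one direction and can be absorbed by shifting along that chain (Cases I--III); the general case is then reduced to rank one via Lemma~\ref{rest}, whose key feature is that restricting the domain by one dimension changes $\dim(\ker S_p^{n+1}/\ker S_p^n)$ \emph{monotonically} (it can only drop, and by at most $1$), which prevents the $S$-side and $T$-side budgets from adding up to $2k$. Without an independent proof of your key inequality, or an analogue of one of these two devices, the proposal is an unproven reduction rather than a proof.
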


The estimates in Theorem~\ref{hurra} are sharp
in the following sense.

\begin{exam}\label{sharp}
 In $X=\mathbb{K}^m$ consider a fixed basis $\{e_1,\dots,e_m\}$ and,
 with respect to this basis,  let the linear operators $A_1$ and $B_1$ be given
  via their $m\times m$ matrix-representation
\begin{equation*}
A_1=\begin{pmatrix}
     0 & 1 & 0 &  \cdots & 0\\
     0 & 0 & 1 &  \cdots  & 0\\
     \vdots & \vdots  & & \ddots & \vdots\\
     0 & 0 & 0  & \cdots & 1 \\
     0 & 0 &  0 &\cdots & 0
    \end{pmatrix}\quad\text{and}\quad
B_1=\begin{pmatrix}
     0 & 1 & 0 &  \cdots & 0\\
     0 & 0 & 1 &  \cdots  & 0\\
     \vdots & \vdots  & & \ddots & \vdots\\
     0 & 0 & 0  & \cdots & 1 \\
     1 & 0 &  0 &\cdots & 0
    \end{pmatrix}.
\end{equation*}
Then $A_1$ and $B_1$ satisfy Hypothesis~{\rm \ref{hypo}} with $k=1$ and $M=\text{\rm span}\,\{e_2,\dots,e_m\}$, and we have
for $j\leq m$
$$
\ker A_1^j=\text{\rm span}\,\bigl\{e_1,\dots,e_j\bigr\}\quad\text{and}\quad \ker B_1^j=\{0\}.
$$
 Hence the assertions in Theorem~{\rm \ref{hurra}} are sharp
 for the case $\lambda=0$ and $k=1$.
In order to obtain sharpness for general $k\in\dN$
consider the $(mk\times mk)$-matrices in $X^k$,
$$
A=A_1\oplus \dots \oplus A_1\quad\text{and}\quad B=B_1\oplus \dots \oplus B_1.
$$
\end{exam}

In the following corollary the bounds in Theorem~\ref{hurra} are considered in the context of the dimensions
of the root subspaces.

\begin{cor}\label{corcor}
 Let $S$ and $T$ be linear operators in $X$ satisfying Hypothesis~{\rm \ref{hypo}}.
 Assume that the root subspace  $\cL_\lambda(S)$
 of $S$ at $\lambda\in\mathbb K$
 is finite dimensional. Then, the following holds:
  \begin{itemize}
 \item[\rm (i)] If the maximal length of Jordan chains of $S$ at $\lambda$  is bounded by $p$ then
\begin{align*}
\vert \dim \cL_\lambda(S) &- \dim \ker (T-\lambda)^p \vert \leq k\,p.
\end{align*}
  \item[\rm (ii)]
   If the maximal lengths of Jordan chains of $S$ at $\lambda$ and
   Jordan chains of $T$ at $\lambda$ are bounded by $p$ and $q$, respectively, then $\cL_\lambda(T)$ is finite dimensional and
\begin{align*}
\bigl| \dim  \cL_\lambda(S)&-
\dim  \cL_\lambda(T)\bigr| \leq k\,\max\{p,q\}.
\end{align*}
 \end{itemize}
\end{cor}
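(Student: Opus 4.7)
My plan is to reduce both parts of the corollary directly to Theorem~\ref{hurra}(i) by first showing that a uniform bound on Jordan chain lengths forces the ascending chain of kernels to stabilise. Concretely, the first step will be the auxiliary observation: if every Jordan chain of $S$ at $\lambda$ has length at most $p$, then $\ker(S-\lambda)^{p+1}=\ker(S-\lambda)^p$, and consequently
$$\cL_\lambda(S)=\ker(S-\lambda)^p.$$
I would prove this by contraposition: any vector $x\in\ker(S-\lambda)^{p+1}\setminus\ker(S-\lambda)^p$ produces the Jordan chain $x_j:=(S-\lambda)^{p-j}x$ for $j=0,\dots,p$ of length $p+1$, contradicting the bound.

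For part (i), this auxiliary fact immediately gives $\dim\ker(S-\lambda)^p=\dim\cL_\lambda(S)<\infty$, so Theorem~\ref{hurra}(i) with $n=p$ is applicable and yields
$$\bigl|\dim\cL_\lambda(S)-\dim\ker(T-\lambda)^p\bigr|=\bigl|\dim\ker(S-\lambda)^p-\dim\ker(T-\lambda)^p\bigr|\le kp,$$
which is the stated estimate.

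For part (ii), I would set $r:=\max\{p,q\}$ in order to apply the theorem only once. The auxiliary fact applied to $S$ (with $p\le r$) gives $\cL_\lambda(S)=\ker(S-\lambda)^r$, which is finite dimensional by hypothesis. Theorem~\ref{hurra}(i) with $n=r$ then shows that $\ker(T-\lambda)^r$ is finite dimensional and
$$\bigl|\dim\ker(S-\lambda)^r-\dim\ker(T-\lambda)^r\bigr|\le kr.$$
Applying the auxiliary fact now to $T$ (with $q\le r$) identifies $\ker(T-\lambda)^r$ with $\cL_\lambda(T)$, so $\cL_\lambda(T)$ is finite dimensional, and substituting the two identifications into the last displayed inequality gives the bound $k\max\{p,q\}$.

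The whole argument is essentially bookkeeping on top of Theorem~\ref{hurra}(i); the only substantive ingredient is the stabilisation lemma for kernels, which is a single-line contradiction, so I do not anticipate any real obstacle. The one point that requires a little care is to remember to extract finite dimensionality of $\cL_\lambda(T)$ in (ii) from Theorem~\ref{hurra}(i) \emph{before} invoking the auxiliary fact for $T$, since the latter only identifies $\cL_\lambda(T)$ with $\ker(T-\lambda)^r$ and does not by itself guarantee finite dimension.
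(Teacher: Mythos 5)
Your proposal is correct and follows essentially the same route as the paper: the paper's (very terse) proof likewise identifies $\cL_\lambda(S)=\ker(S-\lambda)^p$ and $\cL_\lambda(T)=\ker(T-\lambda)^q$ and then invokes the estimate \eqref{formel1} from Theorem~\ref{hurra}(i). You merely make explicit the kernel-stabilisation observation that the paper leaves implicit.
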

\begin{proof}
In (i) we have $\cL_\lambda(S) = \ker (S-\lambda)^p$.
In (ii) we have, in addition,  $\cL_\lambda(T) = \ker (T-\lambda)^q$.
Then (i) and (ii) follow from \eqref{formel1}.
\end{proof}

We emphasize that in (i) of Corollary~\ref{corcor} (where it is assumed that $\cL_\lambda(S)$ is finite dimensional)
the root subspace $\cL_\lambda(T)$
may be infinite dimensional. This will be illustrated by the following example,
where a rank one perturbation of a bijective operator generates an infinitely long
Jordan chain.

\begin{exam}\label{examinf}
Let $X=\ell^2(\dN)\times \ell^2(\dN)$ and consider the following operators $S$ and $T$ in $X$:
\begin{equation*}
\begin{split}
 S\begin{pmatrix}(x_n)_{n\in\dN}\\(y_n)_{n\in\dN}\end{pmatrix}&:=\begin{pmatrix}(y_1,x_1,x_2,\dots)\\ (y_2,y_3,y_4,\dots)\end{pmatrix},\\
 T\begin{pmatrix}(x_n)_{n\in\dN}\\(y_n)_{n\in\dN}\end{pmatrix}&:=\begin{pmatrix}(0,x_1,x_2,\dots)\\ (y_2,y_3,y_4,\dots)\end{pmatrix}.
\end{split}
\end{equation*}
It is clear that the operator $S-T$ is of rank one, and 
\begin{equation*}
 \ker S=\{0\}.
\end{equation*}
On the other hand $T$ has a Jordan chain at $0$ of infinite length, which is given by
$\{\left( \begin{smallmatrix} 0\\ e_n \end{smallmatrix}\right) : n\geq 1\}$ with $\{e_n : n \geq 1\}$
denoting the standard basis in $\ell^2$. Hence,
\begin{equation*}
 \dim\ker T^p= p\quad\text{and}\quad\dim\cL_0(T)=\infty.
\end{equation*}
\end{exam}

The bound in Corollary~\ref{corcor}~(ii) can be improved if the 
number $k$ from Hypothesis~{\rm \ref{hypo}}
is small compared to the number of
linearly independent Jordan chains of $S$. The following corollary was obtained in  \cite{DM03, S05} for matrices and
in \cite[Theorem 3]{HM94} for compact operators. The proof of Corollary~\ref{corHurra} below is omitted since it follows
the same arguments as the proof of \cite[Corollary 1]{S05}.

\begin{cor}\label{corHurra}
 Let $S$ and $T$ be linear operators in $X$ satisfying Hypothesis~{\rm \ref{hypo}}.
 Assume that the root subspace  $\cL_\lambda(S)$
 of $S$ at $\lambda\in\mathbb K$
 is finite dimensional and let $n_1 \geq n_2 \geq \dots \geq n_l$ be the lengths
of the linearly independent Jordan chains of $S$ at $\lambda$. Then, for $k \leq l$
the following holds:
\begin{align}\label{savchenko}
\dim \cL_{\lambda}(S) - n_1 -  n_2 - \dots - n_k\leq \dim \cL_{\lambda}(T).
\end{align}
\end{cor}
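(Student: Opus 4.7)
The plan is to reduce the corollary to a telescoping identity built from Theorem~\ref{hurra}(ii), combined with a standard Young diagram (conjugate partition) identity.

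For $j \geq 1$, set
$$
\alpha_j := \dim\!\left(\frac{\ker(S-\lambda)^{j}}{\ker(S-\lambda)^{j-1}}\right),
\qquad
\beta_j := \dim\!\left(\frac{\ker(T-\lambda)^{j}}{\ker(T-\lambda)^{j-1}}\right),
$$
with the convention $\ker(\cdot)^0 = \{0\}$. The first thing I would record is the standard fact that $\alpha_j$ equals the number of linearly independent Jordan chains of $S$ at $\lambda$ of length at least $j$; consequently, with $n_1\geq\dots\geq n_l$ denoting the chain lengths, $\alpha_j = |\{i : n_i \geq j\}|$, and $(\alpha_j)$ is precisely the partition conjugate to $(n_i)$. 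Telescoping yields
$$
\dim\cL_\lambda(S) = \sum_{j\geq 1}\alpha_j = n_1+\dots+n_l,
$$
and the analogous identity holds for $T$. Since $\cL_\lambda(S)$ is finite dimensional by hypothesis, only finitely many $\alpha_j$ are nonzero; in particular $\alpha_j = 0$ for $j > n_1$.

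Next I would apply Theorem~\ref{hurra}(ii) termwise. Since $\beta_j \geq 0$, the theorem gives simultaneously
$$
\alpha_j - \beta_j \leq k \quad\text{and}\quad \alpha_j - \beta_j \leq \alpha_j,
$$
so $\alpha_j - \beta_j \leq \min(\alpha_j,k)$ for every $j\geq 1$. This is the one place where the main theorem of the paper is invoked, and it is where the hypothesis $k \leq l$ becomes relevant, because otherwise the bound $\min(\alpha_j,k)=\alpha_j$ trivializes the argument and no cancellation occurs.

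Summing over $j$ gives
$$
\dim\cL_\lambda(S) - \dim\cL_\lambda(T)
= \sum_{j\geq 1}(\alpha_j-\beta_j)
\leq \sum_{j\geq 1}\min(\alpha_j,k).
$$
The final step, and the combinatorial heart of the argument, is the conjugate-partition identity
$$
\sum_{j\geq 1}\min(\alpha_j,k) = n_1+n_2+\dots+n_k,
$$
which one sees immediately from the Young diagram of $(n_i)$: $\min(\alpha_j,k)$ counts boxes in column $j$ that lie in the first $k$ rows, and summing over columns counts the total number of boxes in the first $k$ rows. (Here we use $k\leq l$ so that $n_k$ is defined.) Combining the last two displays yields the desired inequality. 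I expect the only subtle point is making sure the summations converge and the telescoping is valid, which is guaranteed by $\dim\cL_\lambda(S)<\infty$; the fact that $\cL_\lambda(T)$ could a priori be infinite dimensional is harmless because the inequality we seek is an upper bound on $\dim\cL_\lambda(S)-\dim\cL_\lambda(T)$, which is meaningful in $[-\infty,\infty)$.
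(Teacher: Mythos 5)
The paper never actually prints a proof of Corollary~\ref{corHurra}; it is omitted with a pointer to \cite[Corollary 1]{S05}, and your argument is correct and is precisely the derivation that reference (and hence the paper) intends: the $\alpha_j$ form the partition conjugate to $(n_i)$, Theorem~\ref{hurra} gives the termwise bound $\alpha_j-\beta_j\le\min(\alpha_j,k)$, and summing recovers $n_1+\cdots+n_k$ via the Young-diagram identity. The only cosmetic point is that for $j=1$ the inequality $\alpha_1-\beta_1\le k$ should be quoted from Theorem~\ref{hurra}(i) with $n=1$ rather than from part (ii); your closing remark correctly disposes of the case $\dim\cL_\lambda(T)=\infty$.
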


We mention that in the situation of Corollary~\ref{corHurra} the root subspace $\cL_{\lambda}(T)$ may be infinite dimensional
(see Example~\ref{examinf}), and, in this case, the right hand side of \eqref{savchenko} is $\infty$.

\section{Preparatory statements}\label{section3}

In this section we prove Theorem~\ref{hurra} for the special case $k=1$.
Notice that it suffices to prove the result for $\lambda=0$; otherwise replace $S$ and $T$ by $S-\lambda$ and $T-\lambda$.
Theorem~\ref{hurra} in this  situation is formulated below in Proposition~\ref{general rank one pert}.
As a preparation we state two simple lemmas. The first
is an immediate consequence of the fact that $S$ and $T$ coincide
on the subspace $M$; cf. Hypothesis~{\rm \ref{hypo}}.

\begin{lem}\label{Mchain}
Let $S$ and $T$ be linear operators in $X$ satisfying Hypothesis~{\rm \ref{hypo}}.
If $\{x_0,\ldots,x_n\}$ is a Jordan chain of $S$ at $\lambda$ such that
 $x_k\in M$ for every $k=0,\ldots,n$, then $\{x_0,\ldots,x_n\}$ is
  also a Jordan chain of $T$ at $\lambda$.
\end{lem}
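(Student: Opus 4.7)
The plan is essentially immediate from the definitions: since every vector $x_k$ in the given chain lies in the common subspace $M$ on which $S$ and $T$ agree, each defining equation of a Jordan chain of $S$ transfers verbatim to one for $T$. I expect no real obstacle here; the lemma is just a careful unwinding of Hypothesis~\ref{hypo}.

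Concretely, I would proceed as follows. First, recall that $\{x_0,\ldots,x_n\}$ being a Jordan chain of $S$ at $\lambda$ means that all the $x_k$ are nonzero, $(S-\lambda)x_0=0$, and $(S-\lambda)x_i=x_{i-1}$ for $i=1,\ldots,n$. By Hypothesis~\ref{hypo} we have $M\subseteq\dom S\cap\dom T$ and $S\restr{M}=T\restr{M}$, which in particular gives
\begin{equation*}
(S-\lambda)x = (T-\lambda)x \qquad \text{for every } x\in M.
\end{equation*}
Applying this identity to each $x_k\in M$, the defining relations above become $(T-\lambda)x_0=0$ and $(T-\lambda)x_i=x_{i-1}$ for $i=1,\ldots,n$, which is precisely the statement that $\{x_0,\ldots,x_n\}$ is a Jordan chain of $T$ at $\lambda$ (the vectors being nonzero is already known from the $S$-chain).

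The only point that requires any attention is the verification that $Tx_k$ is actually well defined for each $k$, i.e.\ that $x_k\in\dom T$; this is immediate from $M\subseteq\dom T$, which is built into Hypothesis~\ref{hypo}. Everything else is direct substitution, so I do not anticipate any step that would need a separate argument.
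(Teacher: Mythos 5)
Your proof is correct and matches the paper's treatment: the paper gives no separate argument for this lemma, stating only that it is an immediate consequence of $S$ and $T$ coinciding on $M$, which is exactly the substitution you carry out. Your added check that $x_k\in\dom T$ via $M\subseteq\dom S\cap\dom T$ is the right (and only) point needing verification.
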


The next lemma follows from the fact that for a linear operator $A$ in $X$ the mapping $x + \ker A\mapsto Ax$,
is an isomorphism between  $X/\ker A$ and $\ran A$.

\begin{lem}\label{isom}
For a linear operator $A$ in $X$ the set $\{x_1 + \ker A,\ldots, x_m + \ker A\}$ is linearly independent
in $X/\ker A$ if and only if the set $\{Ax_1,\ldots, Ax_m\}$ is
 linearly independent in $X$.
\end{lem}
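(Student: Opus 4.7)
The plan is to exploit exactly the canonical map $\varphi : X/\ker A \to \ran A$ defined by $\varphi(x + \ker A) := Ax$ that is hinted at in the sentence preceding the statement. First I would check well-definedness: if $x + \ker A = y + \ker A$ then $x - y \in \ker A$, hence $Ax = Ay$. Linearity is immediate from the linearity of $A$ combined with the definition of the vector space operations on the quotient.

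Next I would verify that $\varphi$ is a linear bijection onto $\ran A$. Surjectivity is automatic since every vector in $\ran A$ is of the form $Ax$ for some $x \in X$. Injectivity reduces to a one-line computation: $\varphi(x + \ker A) = 0$ forces $Ax = 0$, so $x \in \ker A$, so $x + \ker A$ is the zero class. Thus $\varphi$ is a linear isomorphism from $X/\ker A$ onto $\ran A$.

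With $\varphi$ identified as an isomorphism, the lemma follows immediately: a linear isomorphism sends linearly independent sets to linearly independent sets and, through its inverse, also reflects linear independence. Applying this to the set $\{x_1 + \ker A, \dots, x_m + \ker A\}$, whose image under $\varphi$ is $\{Ax_1, \dots, Ax_m\}$, yields both directions of the stated equivalence. The only cosmetic point worth noting is that linear independence of $\{Ax_1, \dots, Ax_m\}$ in the subspace $\ran A$ coincides with linear independence in the ambient space $X$, so the formulation in the lemma is unambiguous. There is no genuine obstacle here; the statement is a direct application of the first isomorphism theorem for vector spaces.
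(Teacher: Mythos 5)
Your proposal is correct and follows exactly the route the paper indicates: the paper gives no separate proof of Lemma~\ref{isom} but derives it from the canonical isomorphism $x+\ker A\mapsto Ax$ between the quotient and $\ran A$, which is precisely the map you construct and verify. Your write-up simply fills in the routine details (well-definedness, injectivity, surjectivity, and that isomorphisms preserve and reflect linear independence), so there is nothing to add.
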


The next proposition is Theorem~\ref{hurra} in the special case $k=1$ and $\lambda = 0$.

\begin{prop}\label{general rank one pert}
Let $S$ and $T$ be linear operators in $X$ satisfying Hypothesis~{\rm \ref{hypo}} with
 $k=1$. Then the following holds:
 \begin{itemize}
 \item[\rm (i)] If $\ker S^n$ is finite dimensional for some $n\in \mathbb N$,
 $n\geq 1$, then the same holds
for $\ker T^n$ and
\begin{align}\label{formel11}
\vert \dim \ker S^n &- \dim \ker T^n \vert \leq n.
\end{align}
  \item[\rm (ii)] If $\ker S^{n+1} / \ker S^{n}$ is finite dimensional
   for some $n\in \mathbb N$, $n\geq 1$, then the same holds
for $\ker T^{n+1} / \ker T^{n}$ and
\begin{align}\label{formel22}
\bigl| \dim \left( \ker S^{n+1} / \ker S^{n} \right) &-
\dim \left( \ker T^{n+1} / \ker T^{n} \right) \bigr| \leq 1. \end{align}
 \end{itemize}
\end{prop}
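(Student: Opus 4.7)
The plan is to reduce to $\lambda = 0$ and exploit the iterated $S$-preimages of $M$. Set
\[
 N_0 := M, \qquad N_{j+1} := \{x \in N_j : Sx \in N_j\},
\]
so that $N_j = \{x \in M : x, Sx, \ldots, S^j x \in M\} \subseteq \dom S^{j+1} \cap \dom T^{j+1}$. Iterating the identity $S\upharpoonright M = T\upharpoonright M$ (combined with Lemma~\ref{Mchain}), one has $S^i x = T^i x$ for every $x \in N_{i-1}$ and every $i \leq n+1$; in particular $\ker S^m \cap N_{m-1} = \ker T^m \cap N_{m-1}$ for each such $m$. The second recurring tool is the projection $\pi\colon X \to X/M$, for which $\pi(\dom S)$ is at most one-dimensional by Hypothesis~\ref{hypo}.

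For (i), I would stack codimension-one estimates along the tower $\ker S^n \supseteq \ker S^n \cap M \supseteq \ker S^n \cap N_1 \supseteq \cdots \supseteq \ker S^n \cap N_{n-1}$. The first step has codimension at most one because $\ker S^n \subseteq \dom S$ and $\dim \pi(\dom S) \leq 1$. For $0 \leq j \leq n-2$, the map $\ker S^n \cap N_j \to X/M$, $x \mapsto S^{j+1} x + M$, is well-defined (using $S^{j+1}(\ker S^n) \subseteq \ker S^{n-j-1} \subseteq \dom S$), has kernel $\ker S^n \cap N_{j+1}$, and image contained in $\pi(\dom S)$. So each of the $n$ steps in the tower has codimension at most one, giving $\dim \ker S^n - \dim(\ker S^n \cap N_{n-1}) \leq n$, and the same bound for $T$. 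Since the deepest intersections coincide, the triangle inequality yields \eqref{formel11}.

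For (ii), I would work directly on the quotient via the well-defined linear map
\[
 \sigma_S \colon \frac{\ker S^{n+1}}{\ker S^n} \longrightarrow \frac{\dom S}{M + \ker S^n}, \qquad x + \ker S^n \longmapsto x + M + \ker S^n.
\]
Its image has dimension at most one (the target is a quotient of $\dom S/M$), and by the modular law $\ker S^{n+1} \cap (\ker S^n + M) = \ker S^n + (\ker S^{n+1} \cap M)$ its kernel is isomorphic to $(\ker S^{n+1} \cap M)/(\ker S^n \cap M)$; and similarly for $\sigma_T$. The main obstacle is that the kernels of $\sigma_S$ and $\sigma_T$ are not equal in general: Example~\ref{sharp} already shows they can differ by one, compensated by an opposite shift in the at-most-one-dimensional images. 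The plan to prevent the error from inflating to $n$ is to iterate the $\sigma$-construction along the tower $M \supseteq N_1 \supseteq \cdots \supseteq N_{n-1}$ simultaneously for $S$ and for $T$, so that at the bottom level Lemma~\ref{Mchain} forces the two quotients to coincide. The technical heart of the argument---and the step I expect to be the hardest---is verifying that the one-dimensional image contributions at each of the $n+1$ levels are controlled by the \emph{same} one-dimensional quotient coming from $\dom S/M$, so that they cancel pairwise between the $S$- and $T$-sides and leave only a single net residual; this pairwise cancellation is precisely what reflects the rank-one nature of the perturbation and yields the sharp bound $1$ in \eqref{formel22}.
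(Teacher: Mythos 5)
Your argument for part (i) is correct and genuinely different from the paper's: you bound $\dim\ker S^n$ and $\dim\ker T^n$ directly against the common subspace $\ker S^n\cap N_{n-1}=\ker T^n\cap N_{n-1}$ via a tower of $n$ codimension-one steps, whereas the paper proves only the case $n=1$ directly and obtains \eqref{formel11} by iterating \eqref{formel22}. The facts you need --- that the subspaces $N_j$ coincide whether defined via $S$ or via $T$, and that $S^i$ and $T^i$ agree on $N_{i-1}$ --- follow at once from Hypothesis~\ref{hypo}, and each step of the tower embeds into the at most one-dimensional space $\dom S/M$ (resp.\ $\dom T/M$). This is a clean, self-contained proof of (i).

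Part (ii), however, has a genuine gap, and it sits exactly where you say you expect the hardest step to be. The map $\sigma_S$ gives $\dim(\ker S^{n+1}/\ker S^n)=\dim\bigl((\ker S^{n+1}\cap M)/(\ker S^n\cap M)\bigr)+\varepsilon_S$ with $\varepsilon_S\in\{0,1\}$, and similarly for $T$; but the two inner quotients need not agree, and descending the tower $M\supseteq N_1\supseteq\dots\supseteq N_{n-1}$ costs a potential $\pm1$ at each of the $n+1$ levels, so the naive count only yields a bound of order $n$, i.e.\ nothing better than what \eqref{formel11} already gives. The asserted ``pairwise cancellation'' cannot be read off from dimension counts of images in $\dom S/M$ and $\dom T/M$: these are two \emph{different} one-dimensional complements of $M$, and the natural maps at the deeper levels (e.g.\ $x+(\ker S^n\cap M)\mapsto Sx+M$) are not even well defined on the successive quotients, since $S$ generally maps $\ker S^n\cap M$ outside $M$. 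What actually forces the cancellation in the paper's proof is a constructive mechanism: starting from $m=\dim(\ker S^{n+1}/\ker S^n)+2$ independent classes in $\ker T^{n+1}/\ker T^n$, one takes the \emph{minimal} level $h$ at which some associated Jordan chain of $T$ leaves $M$ and corrects all the other chains by multiples of suitable \emph{shifts of that single chain}; because the correcting vectors themselves form a Jordan chain of $T$, the corrections propagate consistently through all powers and produce $m-1$ independent classes in $\ker S^{n+1}/\ker S^n$, a contradiction. Some argument playing this role --- one distinguished chain absorbing the discrepancy at every level simultaneously --- is missing from your proposal, so \eqref{formel22} is not established.
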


\begin{proof}
First, we show (i) for the case $n=1$, i.e.
\begin{equation}\label{RegenErfurt}
\vert \dim\ker S - \dim \ker T \vert \leq 1.
\end{equation}
Assume that $\ker S$ is finite dimensional
and $\dim \ker T > \dim \ker S +1$. Then there exist $m:=  \dim \ker S +2$ linearly independent
vectors  $\{x_1, \dots, x_{m}\}$ in $\ker T$.
 If $x_j \in M$ then $Sx_j = Tx_j$. So, if $x_j \in M$ for
all $j=1,\ldots,m$ then $\{x_1, \dots, x_{m}\}\subseteq \ker S$,
a contradiction.

 Hence, there exists $1\leq k_0\leq m$ such that $x_{k_0} \in \ker T \setminus M$.
  After reordering we can assume that $k_0=m$.
As $\dim(\dom T/M)\leq 1$ it is easy to see that there exist $\alpha_k \in \mathbb K$ such that
$$
z_k:=x_k - \alpha_k x_m \in M,\qquad k=1,\ldots, m-1.
$$
Thus $S z_k = T z_k = 0$ for $k=1,\ldots, m-1,$ and we conclude that
$\{z_1,\ldots, z_{m-1}\}$ is a linearly independent set in $\ker S$; a contradiction.
Therefore, $\dim \ker T \leq \dim \ker S +1$ and, in particular,   $\ker T$
is finite dimensional.
By interchanging $S$ and $T$ we also obtain
$\dim \ker S -1 \leq \dim \ker T$ and hence
\eqref{RegenErfurt} follows.\\

In the following we prove
{\rm (ii)}. Let $n\in \mathbb N$, $n\geq 1$, such that
$\ker S^{n+1} / \ker S^{n}$ is finite dimensional and
set
\begin{equation}\label{mmm}
m:= \dim(\ker S^{n+1} / \ker S^{n}) +2.
\end{equation}
Assume that the set $\{ x_{1,n}+\ker T^n, \ldots, x_{m,n}+\ker T^n\}$ is linearly independent in
 $\ker T^{n+1} / \ker T^n$.
For $k=1,\ldots, m$ construct the following Jordan chains of $T$ at $0$:
\[
x_{k,0}:=T^n x_{k,n}, \quad x_{k,1}:=T^{n-1} x_{k,n},
\quad \ldots , \quad
x_{k,n-1}:= Tx_{k,n}.
\]
Then, $x_{k,0}\in\ker T$ for $k=1,\ldots,m$ and, applying Lemma~\ref{isom} to $T^n$ it follows that
 \begin{equation}\label{mmmcontra2}
\{x_{1,0},\ldots,x_{m,0}\} \mbox{ is a linearly independent set in $\ker T$.}
\end{equation}

Define the index set $\sI$ by
$$
\sI:=\bigl\{(k,j): \ x_{k,j}\notin M, 1\leq k\leq m, 0\leq j\leq n\bigr\}.
$$
The set $\sI$ is non-empty. Otherwise $\{x_{k,0},\ldots x_{k,n}\}\subset M$ for
every $1\leq k\leq m$ and, by Lemma~\ref{Mchain}, these $m$
 (linearly independent) Jordan chains of $T$ at $0$ of length $n+1$
  are as well (linearly independent) Jordan chains of $S$ at $0$ of
  length $n+1$, a contradiction to \eqref{mmm}. Set
$$
h:= \min \bigl\{ j : \  (k,j)\in \sI\,\, \text{for some}\,\, k \,\,
\text{with}\,\, 1\leq k \leq m \bigr\}.
$$
 Without loss of generality, after a reordering of the indices,
assume that $(m,h)\in \sI$, i.e. $x_{m,h}\notin M$. Then,
  \begin{equation}\label{Klaus1}
 j<h \mbox{ implies } x_{k,j}\in M \mbox{ for all } k=1,\ldots, m.
  \end{equation}
 In what follows we construct $m-1$ elements $z_1,\ldots,z_{m-1}$
 in $\ker S^{n+1}$ such that $\{z_1 + \ker S^n,\ldots, z_{m-1} + \ker S^n\}$
  is linearly independent in $\ker S^{n+1} / \ker S^{n}$,
  which is a contradiction to \eqref{mmm}.
  We consider three different cases.

\vskip 0.2cm\noindent
{\bf Case I: $h=n$.}
Since $x_{m,n}\not\in M$, there exist $\alpha_{k,n}\in
\mathbb K$ such that
\[
z_k:= x_{k,n} -\alpha_{k,n} x_{m,n}\in M\cap \ker T^{n+1} \quad
\mbox{for } k=1,\ldots, m-1.
\]
From \eqref{Klaus1} it follows that, for every $k=1, \dots, m-1$, the
 Jordan chain $\{x_{k,0}-\alpha_{k,n}x_{m,0},\ldots, x_{k,n-1}-\alpha_{k,n}x_{m,n-1}, z_k\}$
  of $T$ at $0$ is contained in $M$. Then, by Lemma \ref{Mchain} these are also $m-1$
   (linearly independent) Jordan chains of $S$ at $0$ of length $n$.
In particular, the set $\{z_1 + \ker S^n,\ldots, z_{m-1} + \ker S^n\}$ is linearly independent in $\ker S^{n+1} / \ker S^{n}$.

\vskip 0.2cm\noindent
{\bf Case II: $h=n-1$.}  Since $x_{m,n-1}\not\in M$, there
exist $\alpha_{k,n-1}\in \mathbb K$ such that
\[
v_{k,n-1}:= x_{k,n-1} -\alpha_{k,n-1} x_{m,n-1}\in M\cap \ker T^{n} \quad
\mbox{for } k=1,\ldots, m-1.
\]
Let
$
w_{k,n}:= x_{k,n} -\alpha_{k,n-1} x_{m,n}\in \ker T^{n+1}$ for $k=1,\ldots, m-1$
and choose $\alpha_{k,n}\in\mathbb K$ such that
\[
z_k:= w_{k,n}-\alpha_{k,n}x_{m,n-1}\in M\cap \ker T^{n+1} \quad  \mbox{for }\ k=1,\ldots, m-1.
\]
Since $z_k\in M$ and $v_{k,n-1}\in M$, $k=1,\ldots, m-1$, we conclude from $T w_{k,n} = v_{k,n-1}$
together with \eqref{Klaus1} that
\begin{equation*}
 \begin{split}
  S^{n+1}z_k&=S^n Sz_k=S^n Tz_k\\
  &=S^nT(w_{k,n}-\alpha_{k,n}x_{m,n-1})=S^n(v_{k,n-1}- \alpha_{k,n}x_{m,n-2})\\
  &=S^{n-1}T(v_{k,n-1}- \alpha_{k,n}x_{m,n-2})\\
  &=S^{n-1}T(x_{k,n-1} -\alpha_{k,n-1} x_{m,n-1}- \alpha_{k,n}x_{m,n-2})\\
  &=S^{n-1}(x_{k,n-2} -\alpha_{k,n-1} x_{m,n-2}- \alpha_{k,n}x_{m,n-3}) \\
  &\,\,\,\,\vdots\\
  &=S^2(x_{k,1} -\alpha_{k,n-1} x_{m,1}- \alpha_{k,n}x_{m,0})\\
  &=S T (x_{k,1} -\alpha_{k,n-1} x_{m,1}- \alpha_{k,n}x_{m,0})\\
  &=S(x_{k,0} -\alpha_{k,n-1} x_{m,0})=T(x_{k,0} -\alpha_{k,n-1} x_{m,0})=0,\\
 \end{split}
\end{equation*}
and $S^n z_k=x_{k,0}-\alpha_{k,n-1}x_{m,0}\not=0$ for all $k=1, \dots, m-1$.
By \eqref{mmmcontra2} the set $\{x_{1,0}-\alpha_{1,n-1}x_{m,0},\ldots,x_{m-1,0}-\alpha_{m-1,n-1}x_{m,0}\}$
is linearly independent. Then by Lemma~\ref{isom} applied to $S^n$ it follows that the set $\{z_1 + \ker S^n,\ldots, z_{m-1} + \ker S^n\}$
is linearly independent in $\ker S^{n+1} / \ker S^{n}$.

\vskip 0.2cm\noindent
{\bf Case III: $0\leq h\leq n-2$.}
In this case we construct, as in Case II,
two sets of vectors
\begin{equation}\label{Klaus4}
\bigl\{v_{k,j} \in M\cap \ker T^{j+1} : k=1,\ldots, m-1, j= h,\ldots, n-1\bigr\},
\end{equation}
and
\begin{equation}\label{Klaus5}
\bigl\{w_{k,j+1} \in \ker T^{j+2} : k=1,\ldots, m-1, j= h,\ldots, n-1\bigr\}.
\end{equation}
By assumption, $x_{m,h}\not\in M$. We start the construction with $j=h$, that is, with
 the definition of the vectors $v_{k,h}$
and $w_{k,h+1}$ for $k=1,\ldots, m-1$: There exist $\alpha_{k,h}\in \mathbb K$ such that
\[
v_{k,h}:= x_{k,h} -\alpha_{k,h} x_{m,h}\in M\cap \ker T^{h+1} \quad
\mbox{for } k=1,\ldots, m-1.
\]
Using the same coefficients $\alpha_{k,h}\in \mathbb K$, let
\[
w_{k,h+1}:= x_{k,h+1} -\alpha_{k,h} x_{m,h+1}\in \ker T^{h+2} \quad  \mbox{for }\ k=1,\ldots, m-1.
\]
Notice that $T w_{k,h+1} = v_{k,h}$ for $k=1,\ldots, m-1$.
The vectors $v_{k,j}$
and $w_{k,j+1}$ for $k=1,\ldots, m-1$ are defined inductively
for $j= h+1,\ldots, n-1$, 
in the following way:
Fix $j= h+1,\ldots, n-1$ and assume that we have constructed $v_{k,j-1}\in M\cap \ker T^{j}$
and $w_{k,j}\in \ker T^{j+1}$for $k=1,\ldots, m-1$.
Then there exist $\alpha_{k,j}\in\mathbb K$ such that
\[
v_{k,j}:= w_{k,j}-\alpha_{k,j}x_{m,h}\in M\cap \ker T^{j+1}  \quad  \mbox{for }\ k=1,\ldots, m-1.
\]
Also, define
$$
w_{k,j+1}:=x_{k,j+1}-\sum_{i=0}^{j-h} \alpha_{k,h+i}x_{m,j-i+1}\in \ker T^{j+2}
\quad  \mbox{for }\ k=1,\ldots, m-1.
$$
A straightforward computation shows $T w_{k,j+1}= v_{k,j}$ for $k=1,\ldots, m-1$.
So, we have constructed the sets in \eqref{Klaus4} and \eqref{Klaus5}.

Finally, observe that there also exist $\alpha_{k,n}\in\mathbb K$ such that
\[
z_k:= w_{k,n}-\alpha_{k,n}x_{m,h}\in M\cap \ker T^{n+1} \quad  \mbox{for }\ k=1,\ldots, m-1.
\]
Hence,
\begin{align*}
Sz_k  &=  Tz_k=T(w_{k,n}-\alpha_{k,n}x_{m,h}) =v_{k,n-1}-\alpha_{k,n}x_{m,h-1}, \\
S^2z_k  & =  S(v_{k,n-1}-\alpha_{k,n}x_{m,h-1})\\
        &= T(v_{k,n-1}-\alpha_{k,n}x_{m,h-1})\\
        &= T(w_{k,n-1}-\alpha_{k,n-1}x_{m,h} -\alpha_{k,n}x_{m,h-1}) \\
 &=  v_{k,n-2}-\alpha_{k,n-1}x_{m,h-1} -\alpha_{k,n}x_{m,h-2},
\end{align*}
and, in the same way, we show that
\[
S^{n-h}z_k = \ v_{k,h}-\sum_{i=1}^{n-h} \alpha_{k, h+i}x_{m, h-i},
\]
where $x_{m,l}=0$ if $l<0$.
Also, observe that
\begin{align*}
S^{n-h+1}z_k  &=  S(v_{k,h}-\sum_{i=1}^{n-h} \alpha_{k, h+i}x_{m, h-i})\\
&=T(v_{k,h}-\sum_{i=1}^{n-h} \alpha_{k, h+i}x_{m, h-i}) \\
& = T(x_{k,h}-\alpha_{k,h}x_{m,h}-
\sum_{i=1}^{n-h} \alpha_{k, h+i}x_{m, h-i})\\
&= x_{k,h-1}-\sum_{i=0}^{n-h} \alpha_{k, h+i}x_{m, h-i-1}, \\
S^{n-h+2}z_k  & = S(x_{k,h-1}-\sum_{i=0}^{n-h} \alpha_{k, h+i}x_{m, h-i-1})\\
&=
T(x_{k,h-1}-\sum_{i=0}^{n-h} \alpha_{k, h+i}x_{m, h-i-1}) \\ \nonumber
 &= x_{k,h-2}-\sum_{i=0}^{n-h} \alpha_{k, h+i}x_{m, h-i-2}, \\\nonumber
& \,\,\,\,\vdots  \\\label{ArnstadtHbf}
S^n z_k &=  x_{k,0}- \alpha_{k,h} x_{m,0}, \ \ \ \ \text{and} \\ \nonumber
 S^{n+1} z_k &=  0.
\end{align*}
Furthermore, by \eqref{mmmcontra2} the set $\{x_{1,0}-\alpha_{1,h}x_{m,0},\ldots,x_{m-1,0}-\alpha_{m-1,h}x_{m,0}\}$
is linearly independent in $\ker S$. Applying Lemma~\ref{isom} to $S^n$ it follows that the set
$\{ z_{1}+\ker S^n, \ldots, z_{m-1}+\ker S^n\}$ is linearly independent in $\ker S^{n+1}/\ker S^n$.


\vskip 0.2cm
Summing up, we have shown in Cases I-III above
that there exists a linearly independent set $\{ z_{1}+\ker S^n, \ldots, z_{m-1}+\ker S^n\}$
in $\ker S^{n+1}/\ker S^n$, which contradicts \eqref{mmm}.
Therefore,
\begin{equation*}\label{sommer1}
\dim (\ker T^{n+1}/\ker T^n)\leq \dim (\ker S^{n+1}/\ker S^n)+1,
\end{equation*}
and, in particular,
 $\ker T^{n+1} / \ker T^{n}$ is finite dimensional.
Then, \eqref{formel22} follows by interchanging $S$ and $T$.
Finally,  \eqref{formel11} is a consequence of \eqref{RegenErfurt} and
repeated applications of \eqref{formel22}.
\end{proof}

Before proving Theorem \ref{hurra} in Section~\ref{proof} we will improve the upper bound in (ii) of Proposition~\ref{general rank one pert} for a particular class of rank-one perturbations.

Assume that $S$ is a linear operator in $X$ and $M$ is a linear subspace in $\dom S$ such that $\dim\bigl(\dom S/M\bigr)=k$. Then, there exist linearly independent vectors $x_1,\dots,x_k\in (\dom S)\setminus M$ such that
$$
\dom S=M\,\dot +\, \text{span}\{x_1,\dots,x_k\}
$$
We define the restrictions
$$
S_p:=S\upharpoonright \left(M\,\dot +\, \text{span}\{x_1,\dots,x_p\}\right),\qquad 1\leq p\leq k.
$$

\begin{lem}\label{rest}
Given $2\leq p\leq k$, if $\ker S_p^{n+1} / \ker S_p^{n}$ is
  finite dimensional for some $n\in \mathbb N$, then the same holds
for $\ker S_{p-1}^{n+1} / \ker S_{p-1}^{n}$ and
\begin{equation*} \label{Suedkreuz1}
\dim\left(\frac{\ker S_p^{n+1}}{\ker S_p^{n}} \right)-1 \leq \dim \left( \frac{\ker S_{p-1}^{n+1}}{
 \ker S_{p-1}^{n}} \right) \leq \dim\left( \frac{\ker S_p^{n+1}}{\ker S_p^{n}} \right).
 \end{equation*}
\end{lem}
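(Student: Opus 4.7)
My plan is to combine a direct injectivity argument for the upper bound with an application of Proposition~\ref{general rank one pert}(ii) for the lower bound. The key structural observation is that $\dom S_{p-1}$ has codimension $1$ in $\dom S_p$ and $S_{p-1}$ is a genuine restriction of $S_p$, so the pair $(S_p, S_{p-1})$ is a particularly transparent rank-one perturbation in which no auxiliary cancellation is needed to force the two operators to agree on a common subspace.

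For the upper bound I would exhibit the inclusion-induced map
$$\Phi\colon \ker S_{p-1}^{n+1}\big/\ker S_{p-1}^{n} \longrightarrow \ker S_p^{n+1}\big/\ker S_p^{n}.$$
Well-definedness follows from $\ker S_{p-1}^{j}\subseteq \ker S_p^{j}$, which is immediate once one observes that for any $y\in\ker S_{p-1}^{n+1}$ the iterates $S_{p-1}^{j} y$ (for $0\leq j\leq n$) all lie in $\dom S_{p-1}$, so $S_{p-1}^{j} y = S_p^{j} y$ for these $j$. This same observation gives injectivity: if $y\in\ker S_{p-1}^{n+1}$ also satisfies $S_p^{n} y = 0$, then $S_{p-1}^{n} y = S_p^{n} y = 0$, so $y\in\ker S_{p-1}^{n}$. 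Injectivity of $\Phi$ simultaneously establishes the upper inequality and forces $\ker S_{p-1}^{n+1}/\ker S_{p-1}^{n}$ to be finite-dimensional.

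For the lower bound I would set $M':=\dom S_{p-1}$. Then $S_p\!\restr{M'} = S_{p-1}\!\restr{M'}$ trivially, $\dim(\dom S_p/M')=1$, and $\dim(\dom S_{p-1}/M')=0$, so Hypothesis~\ref{hypo} holds for the pair $(S_p, S_{p-1})$ with constant $k=1$. Proposition~\ref{general rank one pert}(ii) then yields
$$\Big|\dim\bigl(\ker S_p^{n+1}/\ker S_p^{n}\bigr) - \dim\bigl(\ker S_{p-1}^{n+1}/\ker S_{p-1}^{n}\bigr)\Big|\leq 1,$$
from which the lower bound follows. I expect the main conceptual step to be recognising that $(S_p, S_{p-1})$ fits Hypothesis~\ref{hypo} with $k=1$; the only genuinely technical point is the verification that $\Phi$ has trivial kernel, without which one would only recover a symmetric $\pm 1$ estimate rather than the sharper one-sided upper bound asserted by the lemma.
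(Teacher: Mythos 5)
Your proposal is correct and follows essentially the same route as the paper: the lower bound is obtained by observing that $(S_p,S_{p-1})$ satisfies Hypothesis~\ref{hypo} with $k=1$ and invoking Proposition~\ref{general rank one pert}, while the upper bound comes from the fact that a nontrivial combination $z\in\ker S_{p-1}^{n+1}$ lying in $\ker S_p^{n}$ must already lie in $\ker S_{p-1}^{n}$ --- the paper phrases this as a contradiction argument, which is exactly the injectivity of your inclusion-induced map $\Phi$.
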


\begin{proof}
By Proposition~\ref{general rank one pert}
only  the second inequality needs to be proved.
Assume that
$ \dim\left( \ker S_p^{n+1} / \ker S_p^{n} \right)=i<\infty$ and that
 the set $\{z_1+\ker S_{p-1}^{n}, \dots, z_{i+1} + \ker S_{p-1}^{n}\}$ is
linearly independent in
$\ker S_{p-1}^{n+1} / \ker S_{p-1}^{n}$. Then, since
$\ker S_{p-1}^{n+1} \subset \ker S_{p}^{n+1}$, there exist $\alpha_1,\dots,\alpha_{i+1}\in\mathbb K$
(not all equal to zero) such that
$$
z:=\alpha_1z_1+\dots +\alpha_{i+1}z_{i+1} \in 
\ker S_p^{n}.
$$
Together with
$z\in\dom S_{p-1}^{n+1}\subset\dom S_{p-1}^n$ we conclude
$z\in\ker S^n_{p-1}$, a contradiction, and Lemma~\ref{rest} is shown.
\end{proof}

\section{Proof of Theorem \ref{hurra}}\label{proof}

We start the proof with some preparations.
By assumption $S$ and $T$ satisfy Hypothesis~{\rm \ref{hypo}}. We discuss the case
$$
\dim\bigl(\dom S/M\bigr)=k\quad\text{and}\quad \dim\bigl(\dom T/M\bigr)=l\leq k.
$$
Then
there exist linearly independent vectors $x_1,\dots,x_k\in (\dom S)\setminus M$ and $y_1,\dots,y_l\in (\dom T)\setminus M$ such that
$$\dom S=M\,\dot +\, \text{span}\{x_1,\dots,x_k\}\quad\text{and}\quad \dom T=M\,\dot +\, \text{span}\{y_1,\dots,y_l\}.$$
Also, we can assume that $\text{span}\{x_1,\dots,x_k\}\cap \text{span}\{y_1,\dots,y_l\}=\{0\}$ (otherwise $M$ can be enlarged).
Next, consider the restrictions
$$
S_p:=S\upharpoonright \left(M\,\dot +\, \text{span}\{x_1,\dots,x_p\}\right),\qquad 1\leq p\leq k,
$$
and
$$
T_q:=T\upharpoonright \left(M\,\dot +\, \text{span}\{y_1,\dots,y_q\}\right),\qquad 1\leq q\leq l.
$$
Clearly $S=S_k$ and $T=T_l$.
As mentioned before, it is sufficient to prove Theorem~\ref{hurra}  for $\lambda=0$. Let $\ker S^{n+1} / \ker S^{n}$ be
  finite dimensional for some $n\in \mathbb N$, $n\geq 1$.
Applying repeatedly Lemma \ref{rest}
to $S=S_k, S_{k-1}, \ldots, S_2$, we see that
$\ker S_1^{n+1} / \ker S_1^{n}$ is finite dimensional
and
\begin{equation}\label{lift1}
\dim\left( \frac{\ker S^{n+1}}{\ker S^{n}} \right)-(k-1) \leq
\dim \left( \frac{\ker S_1^{n+1}}{\ker S_1^{n}}\right) \leq
\dim\left( \frac{\ker S^{n+1}}{\ker S^{n}}\right).
\end{equation}
The operators $S_1$ and $T_1$ satisfy Hypothesis~{\rm \ref{hypo}} with $k=1$.
Hence, by Proposition~\ref{general rank one pert}, 
$\ker T_1^{n+1} / \ker T_1^{n}$ is finite dimensional and
\begin{equation}\label{lift2}
\bigl| \dim \left( \ker S_1^{n+1} / \ker S_1^{n} \right) -
\dim \left( \ker T_1^{n+1} / \ker T_1^{n} \right) \bigr| \leq 1.
\end{equation}
Similarly, repeated application of Lemma \ref{rest}
to $T_2, T_3,\ldots, T_l=T$ shows that
$\ker T^{n+1} / \ker T^{n}$ is finite dimensional and
\begin{equation}\label{lift3}
\dim\left( \frac{\ker T^{n+1}}{\ker T^{n}} \right)-(l-1) \leq
\dim \left( \frac{\ker T_1^{n+1}}{\ker T_1^{n}}\right) \leq
\dim\left( \frac{\ker T^{n+1}}{\ker T^{n}} \right).
\end{equation}
Since $l\leq k$, notice that $-(k-1)\leq -(l-1)$. Therefore
with \eqref{lift1},  \eqref{lift2} and \eqref{lift3}
\begin{equation*}
 \begin{split}
& \dim \left( \ker S^{n+1} / \ker S^{n} \right) - \dim \left( \ker T^{n+1} / \ker T^{n} \right) \\
& \qquad\qquad \geq \dim \left( \ker S_1^{n+1} / \ker S_1^{n} \right)- \dim \left( \ker T^{n+1} / \ker T^{n} \right) \\
& \qquad\qquad \geq \dim \left( \ker T_1^{n+1} / \ker T_1^{n} \right) - 1 - \dim \left( \ker T^{n+1} / \ker T^{n} \right) \\
& \qquad\qquad \geq -(l-1) -1 \\
& \qquad\qquad \geq -(k-1) -1 = -k.
\end{split}
\end{equation*}
An analogous calculation for the upper bound shows
$$
\dim \left( \ker S^{n+1} / \ker S^{n} \right) - \dim \left( \ker T^{n+1} / \ker T^{n} \right)\leq  k,
$$
which yields
$$
\bigl| \dim \left( \ker S^{n+1} / \ker S^{n} \right) -
\dim \left( \ker T^{n+1} / \ker T^{n} \right) \bigr| \leq k,
$$
and assertion (ii) in Theorem~\ref{hurra} holds.
Finally, assertion (i) in Theorem~\ref{hurra} follows from
$$
|\dim\ker S - \dim \ker T |\leq k,
$$
which is shown in a similar
way as in the proof of Proposition~\ref{general rank one pert},
and a repeated application of \eqref{formel2}.


\bibliographystyle{amsplain}

\subsection*{Contact information}

{\bf Jussi Behrndt}

Institut f\"ur Numerische Mathematik, Technische Universit\"at Graz

Steyrergasse 30, 8010 Graz, Austria

behrndt@tugraz.at

\noindent
{\bf Leslie Leben}

Institut f\"ur  Mathematik,  Technische Universit\"{a}t Ilmenau

Postfach 100565, D-98684 Ilmenau,  Germany

leslie.leben@tu-ilmenau.de

\noindent
{\bf Francisco Mart\'{\i}nez Per\'{\i}a }

Depto. de Matem\'{a}tica, Fac. Cs. Exactas, Universidad Nacional de La Plata

C.C.\ 172, (1900) La Plata, Argentina

and Instituto Argentino de Matem\'{a}tica "Alberto P. Calder\'{o}n" (CONICET)

Saavedra 15, (1083) Buenos Aires, Argentina

francisco@mate.unlp.edu.ar

\noindent
{\bf Carsten Trunk }

Institut f\"ur  Mathematik,  Technische Universit\"{a}t Ilmenau

Postfach 100565, D-98684 Ilmenau,  Germany

carsten.trunk@tu-ilmenau.de

\end{document}